\documentclass[a4paper]{article}
\usepackage{hyperref}
\usepackage{url}

\oddsidemargin = 0.6cm
\evensidemargin = 0.6cm
\topmargin = -1.0cm
\textwidth = 14.6cm
\textheight = 22.7cm

\usepackage{graphicx} 
\usepackage[round]{natbib}
\usepackage{algorithm}
\usepackage{algorithmic}

\usepackage{amsmath}
\usepackage{amsfonts}
\usepackage{amsthm}
\usepackage{bm}
\usepackage{nicefrac}
\usepackage{enumerate}
\allowdisplaybreaks

\theoremstyle{plain}
\newtheorem{thm}{\protect\theoremname}
\theoremstyle{plain}

\theoremstyle{definition}

\theoremstyle{plain}

\theoremstyle{plain}

\theoremstyle{plain}

\theoremstyle{plain}
\newtheorem{lem}[]{\protect\lemmaname}
\theoremstyle{definition}

\providecommand{\algorithmname}{Algorithm}
\providecommand{\assumptionname}{Assumption}
\providecommand{\corollaryname}{Corollary}
\providecommand{\definitionname}{Definition}
\providecommand{\propositionname}{Proposition}
\providecommand{\theoremname}{Theorem}
\providecommand{\remarkname}{Remark}
\providecommand{\lemmaname}{Lemma}

\begin{document}

\title{Non-injectivity of Bures--Wasserstein barycentres in infinite dimensions}

\author{Yoav Zemel \\
Institut de Math\'ematiques \\
\'Ecole polytechnique f\'ed\'erale de Lausanne
\\
\texttt{yoav.zemel@epfl.ch}
}

\maketitle

\vspace{-0.02cm}

\begin{abstract}
We construct a counterexample to the injectivity conjecture of \cite{masarotto2018procrustes}.  Namely, we construct a class of examples of injective covariance operators on an infinite-dimensional separable Hilbert space for which the Bures--Wasserstein barycentre is highly non injective --- it has a kernel of infinite dimension.
\end{abstract}

\global\long\def\CC{\mathbb{C}}
\global\long\def\SS{S^{1}}
\global\long\def\R{\mathbb{R}}
\global\long\def\actson{\curvearrowright}
\global\long\def\ra{\rightarrow}
\global\long\def\z{\mathbf{z}}
\global\long\def\ZZ{\mathbb{Z}}
\global\long\def\NN{\mathbb{N}}
\global\long\def\sgn{\mathrm{sgn}\:}
\global\long\def\RRpos{\RR_{>0}}
\global\long\def\var{\mathrm{var}}
\global\long\def\circint{\int_{-\pi}^{\pi}}
\global\long\def\F{\mathcal{F}}
\global\long\def\pb#1{\langle#1\rangle}
\global\long\def\op{\mathrm{op}}
\global\long\def\Op{\mathrm{op}}
\global\long\def\supp{\mathrm{supp}}
\global\long\def\ceil#1{\lceil#1\rceil}
\global\long\def\TV{\mathrm{TV}}
\global\long\def\floor#1{\lfloor#1\rfloor}
\global\long\def\vt{\vartheta}
\global\long\def\vp{\varphi}
\global\long\def\class#1{[#1]}
\global\long\def\of{(\cdot)}
\global\long\def\one{\mathbbm{1}}
\global\long\def\cov{\mathrm{cov}}
\global\long\def\CC{\mathbb{C}}
\global\long\def\SS{S^{1}}
\global\long\def\RR{\mathbb{R}}
\global\long\def\actson{\curvearrowright}
\global\long\def\ra{\rightarrow}
\global\long\def\z{\mathbf{z}}
\global\long\def\ZZ{\mathbb{Z}}
\global\long\def\h{\mu}
\global\long\def\convr{*_{\RR}}
\global\long\def\x{\mathbf{x}}
\global\long\def\ve{\epsilon}
\global\long\def\cv{\mathfrak{c}}
\global\long\def\wh#1{\hat{#1}}
\global\long\def\norm#1{\left|\left|#1\right|\right|}
\global\long\def\Tmean{T}
\global\long\def\Tslope{m}
\global\long\def\degC#1{#1^{\circ}\mathrm{C}}
\global\long\def\X{\mathbf{X}}
\global\long\def\bbeta{\boldsymbol{\beta}}
\global\long\def\b{\mathbf{b}}
\global\long\def\Y{\mathbf{Y}}
\global\long\def\H{\mathbf{H}}
\global\long\def\e{\bm{\epsilon}}
\global\long\def\s{\mathbf{s}}
\global\long\def\t{\mathbf{t}}
\global\long\def\dAc{\partial A_c}
\global\long\def\cl{\mathrm{cl}}
\global\long\def\Hoe{\text{H\"older}}
\global\long\def\bb#1{\mathbb{#1}}
\global\long\def\bX{\bm{X}}
\global\long\def\bY{\bm{Y}}
\global\long\def\D{\mathfrak{D}}
\global\long\def\dGH{d_{\mathcal{GH}}}
\global\long\def\dWp{d_{\mathcal{W},p}}
\global\long\def\umu{\underline{\mu}}
\global\long\def\FLB{\mathbf{FLB}}
\global\long\def\dX{\mathbf{d_\bX}}
\global\long\def\dY{\mathbf{d_\bY}}
\global\long\def\mX{{\mu_\bX}}
\global\long\def\mY{{\mu_\bY}}	
\global\long\def\cal#1{\mathbf{\mathcal{#1}}}
\global\long\def\UXn{U_{\bX n}}
\global\long\def\dXX{{\Delta_{\bX}}}
\global\long\def\dYY{{\Delta_{\bY}}}
\global\long\def\d{\:\mathrm{d}}
\global\long\def\Ds{\mathfrak{d}}
\global\long\def\ph{\hat{p}}
\global\long\def\muh{\hat{\mu}}
\global\long\def\onevec{\one}
\global\long\def\T{\mathcal{T}}
\global\long\def\root{\mathrm{root}}
\global\long\def\parent{\mathrm{par}}
\global\long\def\children{\mathrm{children}}
\global\long\def\scp#1#2{\langle #1, #2 \rangle}
\global\long\def\X{\mathcal{X}}
\global\long\def\br{\bm{r}}
\global\long\def\bs{\bm{s}}
\global\long\def\brh{\hat{\bm{r}}}
\global\long\def\bsh{\hat{\bm{s}}}
\global\long\def\rh{\hat{r}}
\global\long\def\sh{\hat{s}}
\global\long\def\bu{{\bm{u}}}
\global\long\def\bv{\bm{v}}
\global\long\def\bh{\bm{h}}
\global\long\def\bw{\bm{w}}
\global\long\def\bD{{\bm{D}}}
\global\long\def\bx{\bm{x}}
\global\long\def\by{\bm{y}}
\global\long\def\bZ{\bm{Z}}
\newcommand{\WhnmB}{\hat{W}^{(S)}}
\global\long\def\smax{{\sigma_{\mathrm{max}}}}
\global\long\def\sT{\sigma_\T}
\global\long\def\bsT{\bm{\sigma}_\T}
\global\long\def\E{\mathcal{E}}
\global\long\def\V{\mathcal{V}}
\global\long\def\cmax{c_{\mathrm{max}}}
\global\long\def\h{h}
\global\long\def\Id{\mathrm{Id}}
\global\long\def\N{\mathcal{N}}
\global\long\def\diam{\:\mathrm{diam}}
\global\long\def\ns{{n^{*}}}
\global\long\def\ps{{p^{*}}}
\global\long\def\Wt{W^{(t)}}
\global\long\def\BL1{\mathrm{BL}_1}
\global\long\def\I{\mathcal{I}}
\global\long\def\simD{\stackrel{D}{\sim}}
\global\long\def\d{\mathrm{d}}
\global\long\def\dpp{d^p\!}
\newcommand{\innprod}[2]{{\left\langle {#1},{#2}\right\rangle}}

\section{Introduction}
Wasserstein barycenters are Fr\'echet means with respect to the Wasserstein distance of optimal transport (see e.g., the textbooks \cite{rachev1998mass}, \cite{villani2003topics},  \cite{santambrogio2015optimal}, and \cite{panaretos2020invitation}). Introduced by \cite{agueh2011barycenters}, Wasserstein barycentres have been shown to define meaningful notions of average for objects with complex geometric structures.  As such, they have found applications in numerous fields.  Examples include unsupervised dictionary learning \citep{schmitz2018wasserstein}, distributional clustering \citep{ye2017fast}, Wasserstein principal component analysis \citep{seguy2015principal}, neuroimaging \citep{gramfort2015fast} and computer vision \citep{rabin2011wasserstein,solomon2015convolutional,bonneel2016wasserstein}.

The first regularity property for Wasserstein barycenters has been already shown in \cite{agueh2011barycenters}, and states that if at least one of a finite collection of probability measures $\mu_1,\dots,\mu_n$ (with finite second moment) on $\R^d$ is absolutely continuous with a bounded density, then their barycentre $\overline \mu$ is also absolutely continuous with bounded density.  It therefore follows that the optimal transport from the barycentre $\overline\mu$ to each of the original measures $\mu_i$ is given by a transport map, even if $n-1$ of the measures possess no regularity. Perhaps surprisingly, the existence of the transport maps also holds when all the measures are finitely supported on $\R^d$ (\cite{anderes2016discrete}; see also \cite{borgwardt2017strongly}).

When the measures $\mu_1,\dots,\mu_n$ are all Gaussian, the Wasserstein distance has a closed form \citep{olkin1982distance}.  While the barycentre (known as the Bures--Wasserstein barycentre in the Gaussian case) does not admit one in the general case, it is known to be Gaussian itself \citep{agueh2011barycenters}.  Since its behaviour with respect to the mean vectors of the Gaussian distributions is trivial, it is commonplace in the Gaussian case to identify the measures with the corresponding covariances matrices (after setting all the mean vectors to be zero), so that the Bures--Wasserstein barycentre can be viewed as a barycentre of the covariance matrices.   The finite-dimensional Gaussian case of Wasserstein barycentres was already discussed by \cite{agueh2011barycenters}, and studied in more detail by a number of authors, including \cite{alvarez2016fixed} \cite{bhatia}, \cite{zemel2019frechet}, \cite{chewi2020gradient}, \cite{kroshnin2021statistical}, \cite{carlier2021entropic}, and others.

\cite{cuesta1996lower} showed that the results of \cite{olkin1982distance} are also valid when $\R^d$ is replaced by an infinite dimensional separable Hilbert space, where the analogue of covariance matrix is known as covariance operator.  \cite{masarotto2018procrustes} then studied the Bures--Wasserstein barycentre problem on separable Hilbert spaces, in the process identifying it the Procrustes distance on covariance operators considered by \cite{pigoli2014distances}.  An important question raised by \cite{masarotto2018procrustes} is whether transport maps from the Bures--Wasserstein barycentre to each of the covariance must exist, as they do in finite dimensions.  This question is not merely a mathematical curiosity; these maps are closely related to the log maps that lift measures to the tangent space \cite{ambrosio2008gradient}.  As highlighted by \cite{zemel2019frechet}, \cite{masarotto2018procrustes}, and others, this allows to apply linear techniques such as principal component analysis to the nonlinear space of Gaussian measures (or of covariance operators).  The approach employed by \cite{masarotto2018procrustes} was based on the fact, establised by \cite{cuesta1996lower}, that a transport map from $N(0,\Sigma_1)$ to $N(0,\Sigma_2)$ exists provided $\ker(\Sigma_1)\subseteq\ker(\Sigma_2)$, i.e., $\Sigma_1$ is ``more injective" than $\ker(\Sigma_2)$.  Thus \cite{masarotto2018procrustes} were led to conjecture that if all $\Sigma_1,\dots,\Sigma_n$ are all injective, then so is the barycentre $\overline \Sigma$ (see their Conjecture 17).

The injectivity conjecture is true, of course, in finite dimensions, and it also holds in infinite dimensions when the covariance operators commute ($\Sigma_i\Sigma_j=\Sigma_j\Sigma_i$).  Though they have not proven the conjecture, \cite{masarotto2022transportation} have been able to establish the existence of transport maps as bounded operators, thus guaranteeing the availability of log maps and the validity of a principal component analysis procedure.  Nevertheless, the injectivity conjecture remains open.  While no longer needed in order to employ principal component analysis, the conjecture is still relevant for large sample theory of Bures--Wasserstein barycentres; indeed, it is related to a condition recently assumed by \cite{santoro2023large} in their study of a central limit theorem for empirical Bures--Wasserstein barycenters in infinite dimensions.

This paper resloves the injectivity condition in an unequivocally negative manner.  We construct generic counterexamples that show how the conjecture is blatantly false.  The construction is, in our view, striking in its simplicity, and is all the more surprising given the existence of transport maps established by \cite{masarotto2022transportation}.  In short, we show that any sufficiently nondegenerate covariance $\Sigma$ can be the barycentre of injective covariances, even when $\Sigma$ itself is very far from being injective.  Interestingly, it is the very existence of transport maps that provides a clear path into constructing the counterexamples.

\textbf{Notation}. Below, $H$ is an infinite-dimensional separable Hilbert space, and $\Sigma$ (possibly with subscript) is a covariance operator on $H$, that is, a self-adjoint nonnegative definite trace-class linear operator from $H$ to $H$.


\section{Negative resolution of the injectivity conjecture}
\begin{thm}
Let $\Sigma$ be a covariance operator with infinite-dimensional image.  Then for $n\ge2$ there exists a collection $S_1,\dots,S_n$ of injective covariance operators for which $\Sigma$ is the (unique) barycentre.  A population version (with $S_1,\dots,S_n$ replaced by a random operator $S$) is also possible; see the end of the proof.
\end{thm}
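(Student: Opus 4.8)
The plan is to prescribe the \emph{transport maps out of} $\Sigma$ rather than the operators $S_i$ themselves, exploiting exactly the bounded-map calculus of \cite{masarotto2022transportation}. Write $V=\overline{\mathrm{im}\,\Sigma}$ and $K=\ker\Sigma$, so $H=V\oplus K$ and $\Sigma|_V$ is injective. I would look for bounded self-adjoint positive semidefinite operators $T_1,\dots,T_n$ on $H$ with $\frac1n\sum_{i=1}^n T_i=\mathrm{Id}$ and set $S_i:=T_i\Sigma T_i$. Each $T_i$, being the gradient of the convex quadratic $x\mapsto\tfrac12\langle T_i x,x\rangle$, is the optimal transport map from $N(0,\Sigma)$ to its pushforward $N(0,T_i\Sigma T_i)=N(0,S_i)$; hence $S_i$ is a bona fide covariance operator (self-adjoint, positive, and trace class, since $\Sigma$ is and $T_i$ is bounded). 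Because the maps $T_i$ are optimal and average to the identity on $\mathrm{supp}\,N(0,\Sigma)=V$, the standard first-order sufficient condition for Fr\'echet means (\cite{alvarez2016fixed,zemel2019frechet}, in the operator formulation of \cite{masarotto2018procrustes,masarotto2022transportation}) gives that $\Sigma$ is a Bures--Wasserstein barycentre of $S_1,\dots,S_n$; uniqueness will follow because each $N(0,S_i)$ has full support (as $S_i$ will be injective), which makes the functional strictly convex along generalized geodesics.

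The whole point is then to arrange that every $S_i$ is injective while $\Sigma$ is not. A direct computation gives the key identity: since $S_i\succeq0$, we have $S_i x=0\iff\langle\Sigma T_i x,T_i x\rangle=0\iff\Sigma^{1/2}T_i x=0\iff T_i x\in K$, so that $\ker S_i=T_i^{-1}(K)=\big(\overline{T_i(V)}\big)^{\perp}$. Thus $S_i$ is injective precisely when $T_i(V)$ is dense in $H$. This is where infinite dimensionality is indispensable: if $T_i$ were boundedly invertible then $T_i(V)$ would be a closed proper subspace (isomorphic to $V$), so $S_i$ could never be injective; indeed in finite dimensions $T_i(V)$ has dimension at most $\dim V<\dim H$ whenever $K\neq0$, which forces $\ker S_i\neq0$ and is the reason the conjecture holds there. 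Consequently each $T_i$ must be injective but \emph{not} boundedly invertible, with spectrum accumulating at $0$; for $n=2$, where $T_2=2\,\mathrm{Id}-T_1$, the spectrum of $T_1$ must moreover accumulate at $2$, with neither $0$ nor $2$ an eigenvalue.

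The crux, and the step I expect to be the main obstacle, is therefore the construction of such $T_i$: a self-adjoint $T_1$ with $0\preceq T_1\preceq 2\,\mathrm{Id}$, with $0,2\notin\sigma_p(T_1)$, and with \emph{both} $T_1(V)$ and $(2\,\mathrm{Id}-T_1)(V)$ dense in $H$. The density forces $T_1$ to genuinely entangle $V$ and $K$: a block-diagonal, or even a bounded off-diagonal, coupling sends $V$ to the (closed) graph of a bounded operator, which is never dense. I would produce $T_1$ from a model with purely continuous spectrum---for instance realizing $T_1$ as multiplication by the variable on $L^2[0,2]$, which is automatically injective with $2\,\mathrm{Id}-T_1$ injective and free of eigenvalues at the endpoints---and then choose the copy of $V$ inside the model so that $T_1(V)$ and $(2\,\mathrm{Id}-T_1)(V)$ are dense; equivalently, choose $V$ so that no nonzero $g$ satisfies $x\,g\in K$ or $(2-x)g\in K$. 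Verifying this density (equivalently, that the image of $V$ under these multiplications has trivial annihilator) is the genuinely infinite-dimensional heart of the argument and the one place where care is required; the underlying mechanism is that an injective operator with continuous spectrum has dense, non-closed range that can be arranged to meet the closed subspace $K$ only at the origin.

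Once such a $T_1$ (and, for $n>2$, an analogous positive family $T_1,\dots,T_n$ with $\sum_i T_i=n\,\mathrm{Id}$ and each $T_i(V)$ dense) is in hand, the remaining steps are routine: the operators $S_i=T_i\Sigma T_i$ are injective covariance operators, $\Sigma$ is their barycentre by the first-order condition, and uniqueness follows from full support. When $\dim K=\infty$ this exhibits a barycentre with infinite-dimensional kernel built from injective operators, refuting Conjecture~17 of \cite{masarotto2018procrustes}. For the population version I would replace the finite family by a random symmetric positive operator $T$ with $\mathbb{E}[T]=\mathrm{Id}$ and $T(V)$ almost surely dense, set $S:=T\Sigma T$, and run the same first-order argument with the empirical average replaced by the expectation, so that $\Sigma$ is the barycentre of the random injective operator $S$.
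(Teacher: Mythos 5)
Your overall strategy coincides with the paper's: both proofs prescribe self-adjoint nonnegative maps $T_1,\dots,T_n$ averaging to the identity, set $S_i=T_i\Sigma T_i$, and invoke the generative-model/fixed-point characterisation (Theorem~14 of \cite{masarotto2018procrustes}) to conclude that $\Sigma$ is the barycentre. Your reduction of the injectivity of $S_i$ to a condition on $T_i$ is correct and in fact somewhat sharper than what the paper states explicitly: since $S_i\succeq 0$, one has $\ker S_i=T_i^{-1}(\ker\Sigma)=\bigl(\overline{T_i(V)}\bigr)^{\perp}$ with $V=\overline{\mathrm{im}\,\Sigma}$, so $S_i$ is injective iff $T_i(V)$ is dense; your observation that this forces each $T_i$ to be injective but not boundedly invertible, and your explanation of why the conjecture holds in finite dimensions, are both accurate.

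However, there is a genuine gap exactly where you flag it: the existence of a self-adjoint $T_1$ with $0\preceq T_1\preceq 2\,\mathrm{Id}$ such that \emph{both} $T_1(V)$ and $(2\,\mathrm{Id}-T_1)(V)$ are dense is asserted, not proved, and it is the entire technical content of the theorem --- it is precisely the paper's Lemma. In your multiplication model $T_1=M_x$ on $L^2[0,2]$, the required statement is the existence of a closed subspace $K'$ (of the prescribed dimension $\dim\ker\Sigma$, possibly infinite) with $K'\cap\mathrm{ran}(M_x)=K'\cap\mathrm{ran}(M_{2-x})=\{0\}$, together with a unitary carrying $\ker\Sigma$ onto $K'$. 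This is not automatic: since $x\,L^2+(2-x)\,L^2=L^2[0,2]$ (because $f=\tfrac12\bigl(xf+(2-x)f\bigr)$), one cannot choose $K'$ to avoid the two operator ranges jointly, and a closed span of vectors each lying outside a dense non-closed range may still meet that range; an actual argument is needed, e.g.\ taking an orthonormal family with pairwise disjoint supports, each member failing divisibility by $x$ near $0$ and by $2-x$ near $2$, so that every nonzero combination escapes both ranges. The paper fills the corresponding step completely differently and fully explicitly: it takes $F\varphi_k=\varphi_{2k}$ in a basis whose odd-indexed span contains $\ker\Sigma$, sets $T_{1,2}=\mathrm{Id}\pm\tfrac12(F+F^*)$, and proves $T^{-1}(\ker\Sigma)=\{0\}$ by a recurrence/generating-function argument (the polynomials $1\pm2t+t^2$ having roots of modulus $\ge1$ force the coefficients $x_{2^jk}$ not to decay unless they vanish). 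So your proposal is the right frame with a plausible but unexecuted core; as written it does not yet constitute a proof.
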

Obviously the conclusion holds if the kernel of $\Sigma$ has a finite dimension.  This theorem is as strong as it could be, because we know that a bounded linear optimal transport map $T_i$ from $\Sigma$ to an injective operator $S_i$ exists, so that $S_i=T_i\Sigma T_i$ needs to be injective, and with $\rm{dim}(H)=\infty$ this is impossible when the range of $\Sigma$ has finite rank.

The gist of the construction is to ``create injectivity" by exhibiting a nice $T$ such $T\Sigma T$ is injective even if $\Sigma$ is not.  It is noteworthy, however, that the existence of $T$ (established in \cite{masarotto2022transportation}) is not used in our proof.
\begin{lem}
For $\Sigma$ as above, it is possible to find a bounded self-adjoint nonnegative operator $T$ such that $T\Sigma T$ is injective.
\end{lem}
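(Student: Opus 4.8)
The plan is to reduce the lemma to a transversality statement about two ranges and then to build $T$ explicitly in factored form. First I would record the identity $\ker(T\Sigma T)=T^{-1}(\ker\Sigma)$, valid for every bounded self-adjoint nonnegative $T$: if $T\Sigma Tx=0$ then $\langle \Sigma Tx,Tx\rangle=0$, so $\Sigma^{1/2}Tx=0$ and hence $Tx\in\ker\Sigma$, the reverse inclusion being trivial. Writing $H_{0}:=\overline{\operatorname{range}\Sigma}$ (infinite-dimensional, by hypothesis), $K:=\ker\Sigma=H_{0}^{\perp}$, and $P$ for the orthogonal projection onto $H_{0}$, this identity says that $T\Sigma T$ is injective if and only if $PT$ is injective, i.e.\ that $T$ sends no nonzero vector into $\ker\Sigma$.

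Next I would look for $T$ of the form $T=Y^{*}Y$ with $Y=\begin{pmatrix}S & G\end{pmatrix}\colon H_{0}\oplus K\to H_{0}$, where $S\ge 0$ is bounded and injective on $H_{0}$ and $G\colon K\to H_{0}$ is bounded and injective; any such $T$ is automatically bounded, self-adjoint and nonnegative, with block form $\left(\begin{smallmatrix}S^{2} & SG\\ G^{*}S & G^{*}G\end{smallmatrix}\right)$. Since $Y^{*}z=(Sz,G^{*}z)$ for $z\in H_{0}$, one has $PTx=S(Yx)$, so injectivity of $S$ gives $\ker(PT)=\ker Y$. As $Yx=Sh+Gk$ for $x=(h,k)$, the operator $Y$ is injective as soon as $S$ and $G$ are injective with $\operatorname{range}(S)\cap\operatorname{range}(G)=\{0\}$: indeed $Sh=-Gk$ then lies in that intersection, forcing $Sh=Gk=0$ and hence $h=k=0$. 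The lemma is thereby reduced to producing an injective positive $S$ and an injective $G$ whose ranges meet only at the origin.

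The remaining step, which I expect to be the crux, is arranging this transversality when $\dim K=\infty$, for then both ranges are infinite-dimensional subspaces of $H_{0}$ and a careless choice will make them overlap. I would fix an orthonormal basis $\{g_{n}\}$ of $H_{0}$ and take $S$ diagonal with $Sg_{n}=2^{-n}g_{n}$, so that $\operatorname{range}(S)=D=\{x:\sum_{n}4^{n}|\langle x,g_{n}\rangle|^{2}<\infty\}$ is explicit and dense. It then suffices to find a closed subspace $M\subseteq H_{0}$ with $\dim M=\dim K$ and $M\cap D=\{0\}$ and to let $G$ be an isometry of $K$ onto a subspace of $M$, since then $\operatorname{range}(G)\subseteq M$ forces $\operatorname{range}(S)\cap\operatorname{range}(G)\subseteq D\cap M=\{0\}$. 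To build $M$ I would partition $\NN$ into infinitely many infinite sets $A_{1},A_{2},\dots$ and set $m_{i}\propto\sum_{n\in A_{i}}2^{-n}g_{n}$; each $m_{i}$ is a unit vector lying outside $D$, because its $D$-weight is $\sum_{n\in A_{i}}4^{n}(2^{-n})^{2}=\sum_{n\in A_{i}}1=\infty$. The $m_{i}$ are orthonormal (disjoint supports), and for any nonzero $x=\sum_{i}c_{i}m_{i}$ disjointness gives $\sum_{n}4^{n}|\langle x,g_{n}\rangle|^{2}=\sum_{i}|c_{i}|^{2}\cdot\infty=\infty$, so $x\notin D$; thus $M=\overline{\operatorname{span}}\{m_{i}\}$ does the job. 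The cases $0<\dim K<\infty$ and $K=\{0\}$ are handled the same way with finitely many $m_{i}$, or trivially by $T=\Id$.
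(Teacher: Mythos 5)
Your proof is correct, but it takes a genuinely different route from the paper's. You reduce the lemma to the identity $\ker(T\Sigma T)=T^{-1}(\ker\Sigma)$ (valid for $T$ self-adjoint and $\Sigma\ge0$, via $\|\Sigma^{1/2}Tx\|^{2}=\innprod{T\Sigma Tx}{x}$), and then to a transversality problem: produce a dense operator range $D=\operatorname{range}(S)$ inside $H_{0}=\overline{\operatorname{range}\Sigma}$ and a closed subspace $M\subseteq H_{0}$ of sufficiently large dimension with $D\cap M=\{0\}$, which your ``spread-out'' vectors $m_{i}\propto\sum_{n\in A_{i}}2^{-n}g_{n}$ accomplish; the Gram form $T=Y^{*}Y$ then makes boundedness, self-adjointness and nonnegativity automatic. (One cosmetic gloss: in the divergence computation the normalizing constants $\alpha_{i}^{2}=\bigl(\sum_{n\in A_{i}}4^{-n}\bigr)^{-1}$ should appear, giving $\sum_{i}|c_{i}|^{2}\alpha_{i}^{2}\,|A_{i}|=\infty$ whenever some $c_{i}\ne0$; the conclusion is unaffected.) The paper instead chooses a basis putting $\ker\Sigma$ inside the odd-indexed span, sets $F\varphi_{k}=\varphi_{2k}$ and $T=F+F^{*}+2\I$, and certifies injectivity by showing the coefficient recursion $x_{4k}=-2x_{2k}-x_{k}$ is incompatible with $x_{k}\to0$ unless $x=0$ (a generating-function argument). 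Each approach buys something: yours is more structural, works uniformly in $\dim\ker\Sigma$, and isolates the general principle that a proper dense operator range misses an infinite-dimensional closed subspace (in the spirit of Fillmore--Williams operator-range theory). The paper's $T$ is deliberately of the form $2\I$ plus a self-adjoint contraction, which is exactly what the main theorem needs: it yields the pair $T_{1,2}=\I\pm\frac{1}{2}(F+F^{*})$, nonnegative and averaging to the identity, with essentially the same recursion certifying injectivity of both $T_{i}\Sigma T_{i}$. Your $T=Y^{*}Y$ does not feed directly into that step --- note that no invertible $T_{1}$ can ever work there, since $\ker(T_{1}\Sigma T_{1})=T_{1}^{-1}(\ker\Sigma)\ne\{0\}$, so the barycentre construction requires two operators that are simultaneously degenerate in compatible ways --- but as a proof of the lemma as stated, your argument is complete.
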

\begin{proof}
It is possible to find an orthonormal basis $(\varphi_k)_{k=1}^\infty$ of $H$ such that the kernel of $\Sigma$ is contained in the span of $\varphi_1,\varphi_3,\varphi_5,\dots$.

Since $T\Sigma T$ has to be injective, $T$ must be injective, but at the same time its range must be disjoint from the kernel of $\Sigma$, the span of the odd basis functions.  It is easy to construct such an operator:  define $F:H\to H$ by $F\varphi_k=\varphi_{2k}$.  Now $F$ is not self-adjoint, but $F+F^*$ is.  This operator is not nonnegative definite, but by writing $x=\sum x_k\varphi_k$ with $\sum x_k^2=1$ and observing that
\[
\innprod {F^*x}x=
\innprod {Fx}x
=\innprod{\sum x_k\varphi_{2k}}{\sum x_i\varphi_i}
=\sum_{k=1}^\infty x_kx_{2k}
\]
is in $[-1,1]$ by the Cauchy--Schwarz inequality, we obtain that $\|F\|_\infty\le1$ and the same holds for $\|F^*\|_\infty$, where $\|\cdot\|_\infty$ denotes the operator norm.  It therefore follows that the operator\footnote{Multiplying the identity by any $c\ge2$ will do.  The relevant polynomial $1\pm ct+t^2$ has a root of magnitude $\ge1$ for any $c\in\R$, and $T$ is nonnegative definite for any $c\ge2$.}
\[
T=F+F^*+2\mathcal I
\]
is self-adjoint, nonnegative, and with $\|T\|_\infty\le 4$.

We now show that $T$ is our desired operator.  If a vector $x\in H$ satisfies $Tx\in \mathrm{ker}(\Sigma)$ then $\innprod{Tx}{\varphi_{k}}=0$ for all $k$ even.  We shall show that $x=0$.  This would imply that $\Sigma T$ is injective, and therefore $T$ is injective, so that $T\circ (\Sigma T)$ is injective, as required.

Expressing $x=\sum x_k\varphi_k$ and noting that $F^*\varphi_k=\varphi_{k/2}$ (zero if $k$ is odd), we obtain the sequence of equalities
\[
0=\innprod{Tx}{\varphi_k}
=x_{2k}+x_{k/2}+2x_k,\qquad k=2,4,6,\dots,
\]
so that
\[
x_{4k} = -2x_{2k} - x_{k},\qquad k=1,2,3,\dots.
\]
If $x\ne 0$ this implies that $x_k$ cannot go to zero, as the relevant polynomial $1+2t+t^2$ has a root of magnitude $\ge1$.  More precisely fix $k\ge1$ and let $y_j=x_{2^jk}$ so that $x_k=y_0$ and
\[
y_j = -2y_{j-1} - y_{j-2},\qquad j=2,3,\dots.
\]
Define the generating function
\[
g_k(t)=\sum_{j=0}^\infty y_jt^j = y_0 +  y_1t - \sum_{j=2}^\infty 2y_{j-1} t^{j-1}t - \sum_{j=2}^\infty y_{j-2}t^{j-2}t^2
=y_0 +y_1t - 2tg_k(t) + 2ty_0 - t^2g_k(t).
\]
Then
\[
g_k(t) = \frac{y_0+2y_0t+ y_1 t}{1+2t+t^2}
=\frac{-y_0-y_1}{(1+t)^2} + \frac {2y_0+y_1}{1+t}
= a\sum_{j=0}^\infty (j+1)(-1)^jt^j + b\sum_{j=0}^\infty (-1)^jt^j
\]
where $a=-y_0-y_1$ and $b=2y_0+y_1$ are linear functions of $y_1,y_2$.  The coefficient of $t^j$, namely $y_j$, is $(-1)^j[b + aj+a]$.  Since $y_j=x_{2^jk}\to0$ it must be that $a=0$ and $b=0$, which means that $y_0=y_1=0$ and therefore $x_k=y_0=0$.  Since $k$ is arbitrary this shows $x=0$ as desired.
\end{proof}
\begin{proof}[Proof of main theorem]
We shall use the generative model result \citep[Theorem 14]{masarotto2018procrustes} that states that if $T_1$ and $T_2$ are self-adjoint, nonnegative definite and have average identity (namely, $T_1+T_2=2\I$), then any covariance $\Sigma$ is a Bures--Wasserstein barycentre of $S_1=T_1\Sigma T_1$ and $S_2 = T_2\Sigma T_2$.

The operators
\[
T_1 = \frac 12F+ \frac12F^* + I,
\qquad T_2 = -\frac 12F - \frac 12F^* + I
\]
are bounded nonnegative self-adjoint and average to the identity so $\Sigma$ is the Fr\'echet mean of $S_1=T_1\Sigma T_1$ and $S_2=T_2\Sigma T_2$.  We have shown that $4S_1$ is injective, and injectivity of $4S_2$ can be shown in the same way, since the polynomial in the denominator of the new generating function is $(1-2t+t^2)$ which admits a root of absolute value $\ge1$.  The proof is thus complete for $n=2$.  It is easy to extend it to any finite $n\ge2$.  It is also possible to take a random variable $a$ with symmetric distribution on $[-1/2,1/2]$ and define a random bounded nonnegative self-adjoint operator $T=a(F+F^*)+\I$.  Then $\mathbb E[T]=\I$, so that $\Sigma$ is the barycentre of the random operator $T\Sigma T$.  If $\mathbb P(a=0)=0$, then $S=T\Sigma T$ is almost surely injective.
\end{proof}

\bibliographystyle{plainnat}
\bibliography{counterInj}

\end{document}